\documentclass[a4paper,11pt]{article}
\usepackage{amssymb,amsmath,amsthm}
\usepackage{psfig}
\usepackage{graphicx}
\usepackage{epstopdf}
\usepackage{caption}

\numberwithin{equation}{section}

\newcommand{\m}[1]{\mathbb{ #1}}
\newcommand{\mc}[1]{\mathcal{ #1}}

\newfont{\goth}{eufm10 at 12pt}
\newfont{\gots}{eufm8 at 9pt}
\newcommand{\rmd}{{\,\rm d}}

\def\al{\alpha}               \def\ga{\gamma}
\def\de{\delta}       \def\eps{\varepsilon}

\def\si{\sigma}                
\def\ph{\varphi}               
       \def\Ga{\Gamma}

\newtheorem{Thm}{Theorem}[section]
\newtheorem{Prop}[Thm]{Proposition}
\newtheorem{Lem}[Thm]{Lemma}
\newtheorem{Cor}[Thm]{Corollary}

\newtheorem{Rem}[Thm]{Remark}
\newtheorem{Fact}[Thm]{Fact}
\newtheorem{Def}[Thm]{Definition}
\def\bt{\begin{Thm}}
\def\et{\end{Thm}}
\def\br{\begin{Rmq}}
\def\er{\end{Rmq}}
\def\bc{\begin{Cor}}
\def\ec{\end{Cor}}
\def\bp{\begin{Prop}}
\def\ep{\end{Prop}}
\def\bl{\begin{Lem}}
\def\el{\end{Lem}}
\def\bd{\begin{Def}}
\def\ed{\end{Def}}
\def\bq{\begin{quotation}}
\def\eq{\end{quotation}}
\def\bfa{\begin{Fact}}
\def\efa{\end{Fact}}
\def\ra{\rightarrow}
\def\vs{\vspace{1em}}

\def\noi{\noindent}

\title{Quasiisometric harmonic maps\\ between rank one symmetric spaces}
\author{Yves Benoist \& Dominique Hulin}
\date{}

\begin{document}
\maketitle
\begin{abstract}
We prove that a quasiisometric map between rank one symmetric spaces 
is within bounded distance from a unique harmonic map. In particular, this completes the proof of the Schoen-Li-Wang conjecture.
\end{abstract}


\renewcommand{\thefootnote}{\fnsymbol{footnote}} 
\footnotetext{\emph{2010 Math. subject class.}  Primary 53C43~; Secondary 53C24, 53C35, 58E20} 
\footnotetext{\emph{Key words} Harmonic, Quasiisometric, Quasisymmetric, Quasiconformal, Hadamard manifold, Symmetric space}     
\renewcommand{\thefootnote}{\arabic{footnote}} 

\section{Introduction}  
\label{secintro}

\subsection{Main result}
\label{secmain}

We first  explain  the title.

A {\it symmetric space}  is a Riemannian manifold $X$ such that, for every point $x$ in $X$,
there exists a symmetry centered at $x$, i.e. an isometry $s_x$ of $X$ fixing the point $x$ and whose differential at $x$ is minus the identity.

In this article we will call {\it rank one symmetric space} 
a symmetric space $X$ whose sectional curvature is everywhere negative~: $K_X<0$.
The list of rank one symmetric spaces is well known. They are the real hyperbolic spaces $\m H^p_{\m R}$, the complex hyperbolic spaces
$\m H^p_\m C$, the quaternion hyperbolic spaces $\m H^p_\m Q$, with $p\geq 2$, 
and the Cayley hyperbolic plane
$\m H^2_{\m C a}$. 

A map $f:X\ra Y$ between two metric spaces $X$ and $Y$ 
is said to be {\it quasiisometric} if there exists
a constant $c\geq 1$ such that $f$ is {\it $c$-quasiisometric} i.e. such that, for all $x$, $x'$ in $X$,
one has
\begin{equation}
\label{eqnquasiiso}
c^{-1}\, d(x,x')-c
\;\leq\; 
d(f(x),f(x'))
\;\leq\; 
c\, d(x,x')+c\; .
\end{equation}

A map $h:X\ra Y$ between two Riemannian manifolds $X$ and $Y$ is said to be 
{\it harmonic} if its tension field is zero i.e. if it satisfies the elliptic nonlinear partial differential equation
${\rm tr}(D^2h)=0$ where $D^2h$ is the second covariant derivative of $h$. 
For instance, an isometric map with totally geodesic image is always harmonic.
The problem of the existence, regularity  and uniqueness of harmonic maps under various boundary conditions
is a very classical topic (see \cite{EeLem78}, \cite{Jost84},
\cite{donn94}, \cite{Simon96}, \cite{SchoenYau97} or \cite{LinWang08}).  
In particular, when $Y$ is simply connected and has non positive curvature, a harmonic map is always $\mc C^\infty$ i.e. it is indefinitely differentiable,
and is a minimum of the energy functional -see Formula \eqref{eqnenergy}- among maps that agree with $h$ outside a compact subset
of $X$.

The aim of this article is to prove the following

\bt
\label{thdfxhx}
Let $f:X\ra Y$ be a quasiisometric map between rank one symmetric spaces
$X$ and $Y$. Then there exists a unique harmonic map $h:X\ra Y$ which stays within bounded distance from $f$
i.e. such that
$$
\sup_{x\in X}d(h(x),f(x))<\infty\; .
$$
\et

The uniqueness of $h$ is due to Li and Wang in 
\cite[Theorem 2.3]{LiWang98}. In this article we prove the existence of $h$.

\subsection{Previous results and conjectures}
\label{secconj}

When $X$ is equal to $Y$, 
Theorem \ref{thdfxhx} was conjectured by Li and Wang in 
\cite[Introduction]{LiWang98} extending a conjecture of Schoen in \cite{Schoen90}
for the case $X=Y=\m H^2_\m R$. 
When these conjectures were formulated, the case where $X=Y$  is either equal 
to $\m H^p_\m Q$ or $\m H^2_{\m C a}$ was already known by a previous result of Pansu 
in \cite{Pansu89}. In that case the harmonic map $h$ is an onto isometry.

The uniqueness part of the Schoen conjecture was quickly settled by Li and Tan in \cite{LiTam93},
and the proof was extended by Li and Wang to rank one symmetric spaces in their paper \cite{LiWang98}.  
Partial results towards the existence statement in the Schoen conjecture
were obtained in \cite{TamWan95}, \cite{HardtWolf97}, \cite{Rivet99},
\cite{Markovic02}, \cite{BonSch10}.
A major breakthrough was then achieved by Markovic who proved successively 
the Li-Wang conjecture for the case $X=Y=\m H^3_\m R$ in \cite{Marko3}, 
for the case $X=Y=\m H^2_\m R$ in \cite{Marko2} thus solving the initial Schoen conjecture,
and very recently with Lemm for the case $X=Y=\m H^p_\m R$ with $p\geq 3$ in \cite{Markon}.

As a corollary of Theorem \ref{thdfxhx}, we complete the proof of the Li-Wan conjecture.
In particular, we obtain the following. 

\bc
\label{cordfxhx1}
For $p\geq 1$, any quasiisometric map $f:\m H^p_\m C\ra\m H^p_\m C$
is within bounded distance from a unique harmonic map $h:\m H^p_\m C\ra\m H^p_\m C$.
\ec

Another new feature in Theorem \ref{thdfxhx} is that one does not assume 
that $X$ and $Y$ have the same dimension. Even the following special case is new.

\bc
\label{cordfxhx2}
Any quasiisometric map $f:\m H^2_\m R\ra\m H^3_\m R$
is within bounded distance from a unique harmonic map $h:\m H^2_\m R\ra\m H^3_\m R$.
\ec

We finally recall that,
according to a well-known result of Kleiner and Leeb in \cite{KleinerLeeb97}, every
quasiisometric map $f:X\ra Y$ between irreducible higher rank symmetric spaces stays within bounded distance of an isometric map, after a suitable scalar rescaling of the metrics.
Another proof of this result has also been given by Eskin and Farb in \cite{EskinFarb97}.

\subsection{Motivation}
\label{secmotiv}

We now briefly recall  a few definitions and facts that are useful 
to understand the context and the motivation of Theorem \ref{thdfxhx}.
None of them will be used in the other sections of this article.

Let $X$ and $Y$ be rank one symmetric spaces.
Recall first that  $X$ is diffeomorphic to 
$\m R^k$ and has a visual compactification $X\cup\partial X$. 
The visual boundary $\partial X$
is homeomorphic to 
a topological sphere $\m S^{k-1}$. 
Choosing a base point $O$ in $X$, this boundary is endowed with the Gromov quasidistance $d'$
defined by $d'(\xi,\eta):=e^{-(\xi|\eta)_{O}} $
for $\xi$, $\eta$ in $\partial X$ where 
$(\xi|\eta)_{O}$ denotes the Gromov product
(see \cite[Sec. 7.3]{GhysHarp90}). 
A non-constant continuous map $F:\partial X\ra\partial Y$ between the boundaries
is called {\it quasisymmetric} if there exists $K\geq 1$
such that for all $\xi$, $\eta$, $\zeta$ in $\partial X$ with $d'(\xi,\eta)\leq d'(\xi,\zeta)$,
one has $d'(F(\xi),F(\eta)\leq K d'( F(\xi),F(\zeta))$. 

The following nice fact, 
which is also true for a wider class of geodesic Gromov hyperbolic spaces, 
gives another point of view on quasiisometric maps~:

\bfa
\label{factxdx}
Let $X$, $Y$ be rank one symmetric spaces.\\
$a)$ Any quasiisometric map $f:X\ra Y$ induces a boundary map
$\partial f:\partial X\ra\partial Y$ which is quasisymmetric.\\
$b)$ Two quasiisometric maps $f, g:X\ra Y $ have the same boundary map 
$\partial f=\partial g$ if and only if  $f$ and $g$ 
are within bounded distance from one another.\\
$c)$ Any quasisymmetric map $F :\partial X\ra\partial Y$ is the boundary map 
$F=\partial f$ of a quasiisometric map $f:X\rightarrow Y$.
\efa

This fact has a long history. Point $a$  is in the paper of Mostow \cite{Mostow73} 
and was extended later by Gromov in \cite[Sec. 7]{Gromov87} 
(see also  \cite[Sec. 7]{GhysHarp90}). Point $b$
is in the paper of Pansu \cite[Sec. 9]{Pansu89}. Point $c$ 
is in the paper of Bourdon and Pajot  \cite[Section 2.2]{BourdonPajot03}
extending previous results of Tukia in \cite{Tukia85},
of Paulin in \cite{Paulin96},
and of Bonk, Heinonen, Koskela in \cite{BonkHeiKos01}.

Recall that a diffeomorphism $f$ of $X$ is said to be {\it quasiconformal} 
if the function $x\mapsto \|Df(x)\|\|Df(x)^{-1}\|$ is uniformly bounded on $X$. 
The original formulation of the Schoen conjecture involved quasiconformal diffeomorphisms
instead of quasiisometries~:
{\it Every quasisymmetric homeomorphism F of $\m S^1$ is the boundary map
of a unique quasiconformal harmonic diffeomorphism of $\m H^2_\m R$}.

Relying on a previous result of Wan  in \cite[Theorem 13]{Wan92}, Li and Wang  pointed out in \cite[Theorem 1.8]{LiWang98} that 
{\it a harmonic map between the hyperbolic plane $\m H^2_\m R$
and itself is a quasiconformal diffeomorphism 
if and only if it is a quasiisometric map}.
This is why
Li and Wang  formulated in \cite{LiWang98} the higher dimensional generalisation 
of the Schoen conjecture using quasiisometries instead of quasiconformal diffeomorphisms.

Note that  in dimension  $k\geq 6$ 
there exist  harmonic maps 
between compact manifolds of negative curvature
which are homotopic to a diffeomorphism 
but which are not diffeomorphisms (see  \cite{FarrellOR00}).

\subsection{Strategy}
\label{secstrategy}

To prove our Theorem \ref{thdfxhx}, we start with 
a $c$-quasiisometric map $f:X\ra Y$ between rank one symmetric spaces.
We want to exhibit a harmonic map $h:X\ra Y$ within bounded distance from $f$. 

We will first gather in Chapter \ref{sechadamani} 
a few properties of Hada\-mard manifolds~:
images of triangles under quasiisometric maps, Hessian of the distance function,
gradient estimate for functions with bounded 
Laplacian.\vs

The first key point in our proof is the simple remark 
that, thanks to a smoothing process, we may assume without loss of generality
that the $c$-quasiisometric map $f$ is $\mc C^\infty$ and that its first and second covariant derivatives $Df$ and $D^2f$
are uniformly bounded on $X$ (Proposition \ref{prosmoothquasi}). 
We fix a point $O$ in $X$. For $R>0$, we denote by $B_{_R}:=B(O,R)$ 
the closed ball in $X$ with center $O$ and radius $R$
and by $\partial B_{_R}$ the sphere that bounds $B_{_R}$. 
We introduce the unique harmonic map $h_{_R}:B_{_R}\ra Y$ whose restriction to the sphere
$\partial B_{_R}$ is equal to $f$. This map $h_{_R}$ is $\mc C^\infty$ on the closed ball $B_{_R}$.
The harmonic  map $h$ will be constructed as the limit of  the maps $h_{R}$    
when $R$ goes to infinity.
In order to prove the existence of this limit $h$,
using  a classical compactness argument that we will recall in Section
\ref{secexistharm}, 
we just have to check that on the balls $B_{_R}$ the distances 
$$
\rho_{_R}:=d(h_{_R},f)
$$ 
are uniformly bounded in $R$. 
We will argue by contradiction and assume that 
we can find radii $R$ with $\rho_{_R}$ arbitrarily large.\vs

The second key point in our proof is what we call the boundary estimate 
(Proposition \ref{proboundary}).
It tells us that the ratio 
$\displaystyle
\frac{d(h_{_R}(x),f(x))}{d(x,\partial B_{_R})}
$
is uniformly bounded 
for $R\geq 1$ and $x$ in $B_{_R}$. 
In particular, when $\rho_{_R}$ is large, the ball
$B(O,R\!-\!1)$ contains a ball $B(x_{_R},r_{_R})$ whose center 
$x_{_R}$ satisfies $d(h_{_R}(x_{_R}),f(x_{_R}))=\rho_{_R}$ and 
whose radius $r_{_R}\geq 1$
is quite large. A good choice for the radius $r_{_R}$ will be
$r_{_R}=\rho_{_R}^{1/3}$.
We will focus on the restriction of the  maps $f$ and $h_{_R}$ to this ball $B(x_{_R},r_{_R})$. Let $y_{_R}:=f(x_{_R})$.
For $z$ in  $B(x_{_R},r_{_R})$, we will write 
$$
f(z)={\rm exp}_{y_{_R}}(\rho_f(z) v_f(z))
\;\;\;{\rm and}\;\;\;
h_{_R}(z)={\rm exp}_{y_{_R}}(\rho_h(z) v_h(z)),
$$
where $\rho_f(z)$, $\rho_h(z)$ are non negative and 
$v_f(z)$, $v_h(z)$ lie  in the unit sphere
$T^1_{y_{_R}}Y$ of the tangent space $T_{y_{_R}}Y$. 
We write $v_{_R}:=v_h(x_{_R})$ and we denote by 
$\theta(v_1, v_2)$ the angle between two vectors $v_1$, $v_2$ of the sphere $T^1_{y_{_R}}Y$.\vs

The third key point in our proof 
is to write for each point $z$ on the sphere $S(x_{_R},r_{_R})$ 
the triangle inequality 
\begin{equation*}
\theta(v_f(z),v_{_R})\leq \theta(v_f(z),v_h(z)) + \theta(v_h(z),v_{_R})
\end{equation*}
and, adapting an idea of Markovic in \cite{Marko2},  to focus on the set 
$$
W_{_R}:=\{z\in S(x_{_R},r_{_R})
\mid \rho_h(z)\geq \rho_{_R}\!-\! \frac{r_{_R}}{2c}
\;{\rm and}\;
\rho_h(z_t)\geq\frac{\rho_{_R}}{2} 
\;\mbox{\rm for}\; 
0\leq t\leq r_{_R} \}
$$
where $(z_t)_{0\leq t\leq r_{_R}}$ is the geodesic
segment between $x_{_R}$ and $z$.

\begin{figure}[ht]
\centerline{\includegraphics[width=12cm]{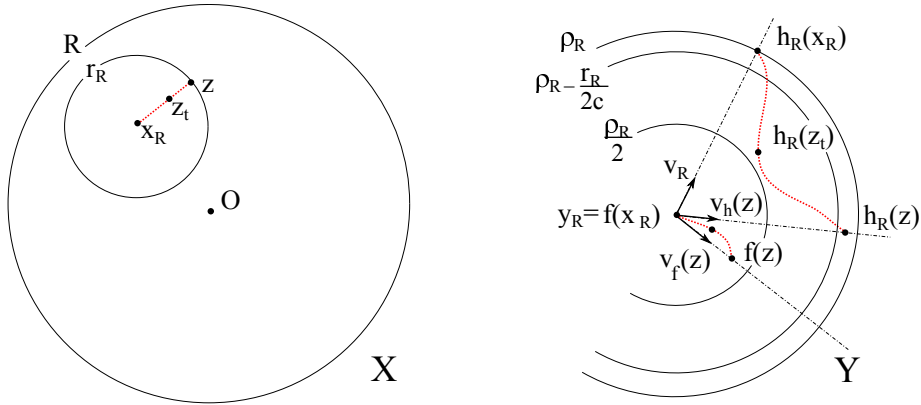}}
\caption{ {\small The vectors $v_f(z)$, $v_h(z)$ and $v_{_R}$ when $z$ belongs to $W_{_R}$.}} 
\label{fig:existharm}
\end{figure}

The contradiction will come from the fact that when both $R$ and $\rho_{_R}$
go to infinity,   the two angles 
$\theta_1:=\theta(v_f(z),v_h(z))$ and $\theta_2:=\theta(v_h(z),v_{_R})$ converge to $0$
uniformly for $z$ in $W_{_R}$,
while one can find $z=z_{_R}$ in $W_{_R}$ such that 
the other angle $\theta_0=\theta(v_f(z),v_{_R})$ stays away from $0$.
Here is a rough sketch of the arguments used to estimate these three angles.

To get the upper bound for the angle $\theta_1$ (Lemma \ref{lemI1}), 
we use the 
relation between angles and Gromov products (Lemma \ref{lemcomparison}) and 
we notice that the set $W_{_R}$ has been chosen so that
the Gromov product $(f(z)|h_{_R}(z))_{y_{_R}}$ is large.

To get the upper bound for the angle $\theta_2$ (Lemma \ref{lemI2}), 
we check that 
the gradient $Dv_h$ is uniformly small on the geodesic segment between $x_{_R}$ and $z$.
This follows from  the comparison inequality
$\;\; 2\,{\rm sinh}(\rho_h/2)\,\| Dv_h\|\leq \| Dh_{_R}\|\;$,
from the bound for $\| Dh_{_R}\|$ that is due to Cheng (Lemma \ref{lemcheng}), and from the definition of $W_{_R}$ 
that ensures that 
the factor ${\rm sinh}(\rho_h/2)$ stays very large
on this geodesic.

To find a point $z=z_{_R}$ in $W_{_R}$ such that the angle $\theta_0$ 
is not small (Lemma \ref{lemI0}), 
we use the almost invariance
of the Gromov products -and hence of the angles- under 
a quasiisometric map (Lemma \ref{lemproduct}).
We also use a uniform lower bound on the measure of $W_{_R}$ (Lemma \ref{lemsiwr}). 
This lower bound is a consequence of the subharmonicity of the function $\rho_h$  
(Lemma \ref{lemdefh}) and of
Cheng's estimate.
\vs

We would now like to point out the difference 
between our approach and those of the previous papers. 
The starting point of Markovic's method in \cite{Marko2}
is the fact that any $K$-quasi\-symmetric homeomorphism 
of the circle is a uniform limit of $K$-quasi\-symmetric diffeomorphisms. 
This fact  has no known analog in high dimension.
The starting point of the methods in both \cite{Marko3} and\cite{Markon}
is the fact that a quasisymmetric homeomorphism of 
the sphere $\m S^{k-1}$ is almost surely differentiable.
This fact is not true on $\m S^{1}$. 
Since our strategy avoids the use of quasisymmetric maps,
it gives a unified approach for all $\m H^p_\m R$ with $p\geq 2$,  
and also works  when $X$ and $Y$ have different dimensions.
\vs

Both authors thank the MSRI for its hospitality,
the Simons Foundation and the GEAR Network for their support, in Spring 2015 at the
beginning of this project.

\section{Hadamard manifolds}
\label{sechadamani}

In this preliminary chapter, we recall various estimates on a Hada\-mard manifold~: for the 
angles of a geodesic triangle,
for the  Hessian of the distance function,
and also for functions with bounded Laplacian.

\subsection{Triangles and quasiisometric maps}  
\label{secgeotriangle}
\bq
We first recall basic estimates for triangles in 
Hada\-mard manifolds
and explain how one controls the angles of the image of a triangle 
under a quasiisometric map.
\eq

All the Riemannian manifolds will be assumed to be connected 
and to have dimension at least two. We will denote by 
$d$ their distance function. 

A Hadamard manifold  is a complete simply connected Riemannian manifold $X$ of 
non positive curvature $K_X\leq 0$.  
For instance, the Euclidean space $\m R^k$ is a Hadamard manifold with zero curvature $K_X=0$,
while the rank one symmetric spaces are Hadamard manifolds with negative curvature $K_X<0$.
We will always assume without loss of generality 
that the metric on a rank one symmetric space  $X$
is normalized so that 
$-1\leq K_X\leq -1/4$.

Let  $x_0$, $x_1$, $x_2$ be three points on a Hadamard manifold $X$.
The {\it Gromov product} of the points $x_1$ and $x_2$ seen from $x_0$ is defined as
\begin{equation}
(x_1|x_2)_{x_0}:= (d(x_0,x_1)+d(x_0,x_2)-d(x_1,x_2))/2.
\end{equation}

We recall the basic comparison lemma which is one of the motivations for introducing the Gromov product.

\bl
\label{lemcomparison}
Let $X$ be a Hadamard manifold with
$-1\leq K_X\leq -a^2< 0$,
let $T$ be a geodesic triangle in $X$ with vertices $x_0$, $x_1$, $x_2$,
and let $\theta_0$ be the angle of $T$ at the vertex $x_0$.\\
$a)$ One has\;\; $(x_0|x_2)_{x_1}\geq d(x_0,x_1)\,\sin^2(\theta_0/2)$.\\
$b)$ One has \;\;
$\theta_0\leq 4\, e^{-a\,(x_1|x_2)_{x_0}} .$\\
$c)$ Moreover, when $\min( (x_0|x_1)_{x_2},(x_0|x_2)_{x_1})\geq 1$, one has 
$\theta_0\geq e^{-(x_1|x_2)_{x_0}}$.
\el

\begin{proof}
Assume first that $X$ is the hyperbolic plane $\m H^2_\m R$ with curvature $-1$. Let
$\ell_0:=d(x_1,x_2)$, $\ell_1:=d(x_0,x_1)$, $\ell_2:=d(x_0,x_2)$ be the side lengths of  $T$
and $m:=(\ell_1+\ell_2-\ell_0)/2$ so that
$$
(x_1|x_2)_{x_0}=m
\; ,\;\;
(x_0|x_2)_{x_1}=\ell_1-m
\; ,\;\;
(x_0|x_1)_{x_2}=\ell_2-m\; .
$$
The hyperbolic triangle equality reads as
\begin{equation}
\label{eqnsinshsh}
\sin^2(\theta_0/2) =
\frac{{\rm sinh}(\ell_1-m)}{{\rm sinh}(\ell_1)}
\,\frac{{\rm sinh}(\ell_2-m)}{{\rm sinh}(\ell_2)}\, .
\end{equation}
In particular, one has $\sin^2(\theta_0/2) \leq \frac{\ell_1-m}{\ell_1}$.
This proves Point $a$.

Points $b$ and $c$ follow from \eqref{eqnsinshsh} and the basic inequalities~:
\begin{eqnarray*}
\frac{{\rm sinh}(\ell-m)}{{\rm sinh}(\ell)}\leq e^{-m}
&\mbox{\rm  for}& 
0\leq m\leq \ell\, ,\\
\frac{{\rm sinh}(\ell-m)}{{\rm sinh}(\ell)}\geq  e^{-m} /2
&\mbox{\rm  for}&
0\leq m\leq \ell-1\, ,\\
\theta_0/4\leq \sin(\theta_0/2)\leq \theta_0/2 
&\mbox{\rm  for}& 
0\leq\theta_0\leq\pi.
\end{eqnarray*}

When the sectional curvature of $X$ is pinched between $-1$ and $-a^2$,
the triangle comparison theorems of Alexandrov and Toponogov (see \cite[Theorems 4.1 and 4.2]{Karcher89}) 
ensure that the result also holds in $X$.
\end{proof}

We now recall the effect of a quasiisometric map on the Gromov product.

\bl
\label{lemproduct}
Let $X$, $Y$ be Hadamard manifolds with
$-b^2\leq K_X\leq -a^2< 0$ and $-b^2\leq K_Y\leq -a^2< 0$,
and let $f:X\ra Y$ be a $c$-quasiisometric map.
There exists $A=A(a,b,c)>0$ such that, for all 
$x_0$, $x_1$, $x_2$ in $X$, one has
\begin{equation}
\label{eqnproduct}
c^{-1}(x_1|x_2)_{x_0}-A
\;\leq \;(
f(x_1)|f(x_2))_{f(x_0)}
\;\leq \;
c(x_1|x_2)_{x_0}+A\; .
\end{equation}
\el

\begin{proof}
This is a general property of quasiisometric maps between Gromov $\de$-hyperbolic spaces which is due to M. Burger. See \cite[Prop. 5.15]{GhysHarp90}.
\end{proof}

\subsection{Hessian of the distance function}  
\label{sechessiandist}

\bq
We now recall basic estimates for the Hessian of the distance function and of its square on a Hadamard manifold.
\eq

When $x_0$ is a point in a Riemannian manifold $X$, we denote by $d_{x_0}$ the distance function 
defined by $d_{x_0}(x)=d(x_0,x)$ for $x$ in $X$.
We denote  by $d^2_{x_0}$ the square of this function.
When $F$ is a $\mc C^2$ function on $X$, 
we denote by $DF$ the differential of $F$ 
and by $D^2F$ the Hessian of $F$,
which is by definition the second covariant derivative of $F$.

\bl
\label{lemhessiandist}
Let $X$ be a Hadamard manifold and $x_0\in X$.\\
$a)$ The Hessian of the square $d_{x_0}^2$ satisfies on $X$ 
\begin{equation}
\label{eqnd2rhox2}
D^2d_{x_0}^2 \geq 2\, g_{_X}\, ,
\end{equation}
where   $g_{_X}$ is the Riemannian metric on $X$.\\
$b)$ Assume that  
$-b^2\leq K_X\leq -a^2<0$. The Hessian of the distance function $d_{x_0}$ satisfies on $X\setminus\{x_0\}$
\begin{equation}
\label{eqnd2rhox0}
a\coth (a\, d_{x_0})\, g_0\leq D^2 d_{x_0} \leq 
b \coth (b\, d_{x_0})\, g_0\, ,
\end{equation}
where $g_0:=g_X -Dd_{x_0}\otimes Dd_{x_0}$.
\el

\begin{proof} $a)$ When $X={\mathbb R}^k$ is the $k$-dimensional Euclidean space, one has $ D^2d_{x_0}^2=2\, g_{_X}$. The general statement follows from this model case and the Alexandrov triangle comparison theorem.\\ 
$b)$  Assume first that $X={\mathbb H}^2_{\m R}$ is the real hyperbolic plane with  curvature $-1$.
Using the expression $\cosh (\ell_t)=\cosh(\ell_0 ) \cosh (t)$ for the length $\ell _t$ of the hypothenuse of a right triangle with side lengths $\ell_0$ and $t$, one infers that
$$
D^2 d_{x_0} =  
\coth ( d_{x_0})\, g_0\, .
$$
The general statement follows by the same argument combined again with the Alexandrov and Toponogov triangle comparison theorems. 
\end{proof}

\subsection{Functions with bounded Laplacian}  
\label{secboundedlap}

\bq
We give  a bound for functions defined on balls of a  Hadamard manifold, when their Laplacian is bounded
and their boundary value is equal to $0$.
\eq

The Laplace-Beltrami operator $\Delta$ on a Riemannian manifold $X$
is defined as the trace of the Hessian.
In local coordinates, the Laplacian of a function $F$ reads as
\begin{equation*}
\label{eqnlaplacian}
\Delta F={\rm tr}(D^2F)=\frac{1}{V}\sum_{i,j=1}^k
\frac{\partial}{\partial x_i}(V\, g_{_X}^{ij}
\frac{\partial}{\partial x_j}F)
\end{equation*}
where $V=\sqrt{\det (g_{_X}^{ij})}$ is the volume density.
The function $F$ is said to be harmonic if $\Delta F=0$,
subharmonic if $\Delta F\geq 0$ and superharmonic if $\Delta F\leq 0$. The study of  harmonic functions on Hadamard manifolds
has been initiated by Anderson and Schoen in \cite{AndSchoen85}.

\bp
\label{proboundedlap}
Let $X$ be a Hadamard manifold with
$K_X\leq -a^2<0$. Let $O$ be a point of $X$ 
and $B_{_R}=B(O,R)$ be the closed ball with center $O$ and radius $R>0$.
Let $G$ be a $\mc C^2$ function on $B_{_R}$ and let $M>0$.
Assume that 
\begin{eqnarray}
\label{eqnboundedlap1}
|\Delta G|&\leq &M
\;\;\;\mbox{\rm on $B_{_R}$}\\
\nonumber
G&=& 0
\;\;\;\mbox{\rm on $\partial B_{_R}$}.
\end{eqnarray}
Then, for all $x$ in $B_{_R}$,  one has the upper bound
\begin{equation}
\label{eqnboundedlap2}
|G(x)|\leq \tfrac{M}{a}\,d(x,\partial B_{_R})\; .
\end{equation}
\ep

\noi {\bf Remark. }
The assumption  in Proposition \ref{proboundedlap} that the curvature is negative is essential.
Indeed, the function $G:=R^2- d_{O}^2$ on the ball $B_{_R}$ of the Euclidean space $X=\m R^k$
satisfies \eqref{eqnboundedlap1} with $M=2k$
but does not satisfy \eqref{eqnboundedlap2} because the gradient of $G$ at a point 
$x$ on the sphere $\partial B_{_R}$ has norm $2\, R$.
\vs

The proof of Proposition  \ref{proboundedlap} relies on the following.

\bl
\label{lemboundedlap}
Let $X$ be a Hadamard manifold with
$K_X\leq -a^2$ and $x_0$ be a point of $X$. 
Then, the function $d_{x_0}$ is subharmonic. More precisely,
the distribution $\Delta\, d_{x_0}\! -\! a$ is  non-negative.
\el

\begin{proof}[Proof of Lemma \ref{lemboundedlap}]
Since $X$ is a Hadamard manifold, Lemma \ref{lemhessiandist}
ensures that the function $d_{x_0}$ is $\mc C^\infty$ on 
$X\setminus\{ x_0\}$ and satisfies $\Delta\, d_{x_0}(x)\geq a$ for $x\neq {x_0}$.
It remains to check that the distribution 
$\Delta\, d_{x_0}\! -\! a$ is non-negative on $X$.
The function $d_{x_0}$ is the uniform limit when $\eps$ converges to  $0$ 
of the $\mc C^\infty$ functions 
$d_{x_0,\eps}:=(\eps^2+d_{x_0}^2)^{1/2}$.
One computes their Laplacian on $X\setminus\{ x_0\}$~:
$$
\Delta d_{x_0,\eps}=\frac{d_{x_0}}{(\eps^2+d_{x_0}^2)^{1/2}}\, \Delta d_{x_0} +
\frac{\eps^2}{(\eps^2+d_{x_0}^2)^{3/2}}\; .
$$
Hence, one has on $X\setminus\{ x_0\}$~:
$$
\Delta d_{x_0,\eps}\geq \frac{a\, d_{x_0}}{(\eps^2+d_{x_0}^2)^{1/2}}\; .
$$
Since both sides are continuous functions on $X$, this inequality 
also holds on $X$.
Since a limit of non negative distributions is non negative,
one gets the inequality
$\Delta d_{x_0}\geq a$ on $X$ by letting $\eps$ go to $0$.
\end{proof}

\begin{proof}[Proof of Proposition \ref{proboundedlap}]
According to Lemma \ref{lemboundedlap}, both functions 
$$
G_{\pm}:=\tfrac{M}{a}(R-d_{O})\pm G
$$
are superharmonic on $B_{_R}$, i.e. one has $\Delta G_\pm\leq 0$.
Since they vanish on the boundary $\partial B_{_R}$,
the maximum principle ensures that these functions $G_\pm$ are non-negative on 
the ball $B_{_R}$.
\end{proof}

\section{Harmonic maps}
\label{secharmonicmap}

In this chapter we begin the proof of Theorem \ref{thdfxhx}. 
We first recall basic facts satisfied by harmonic maps.
We then explain why we can assume our $c$-quasiisometric map $f$
to be $\mc C^\infty$ with bounded covariant derivatives.
We also explain why an upper bound on $d(h_{_R},f)$ 
implies the existence of the harmonic map $h$.
Finally we provide this upper bound 
near the boundary $\partial B_{_R}$.

\subsection{Harmonic maps and the distance function}  
\label{secharmonicdist}

\bq
In this section, we recall two useful facts satisfied by a harmonic map $h$~:
the subharmonicity of the functions $d_{y_0}\circ h$, and 
Cheng's estimate for the differential $ Dh$.
\eq

\begin{Def}
\label{defharmonic}
Let $h:X\ra Y$ be a $\mc C^\infty$ map between two Riemannian manifolds.
The tension field of $h$ is the trace of the second covariant derivative $\tau(h):= {\rm tr} D^2h$.
The map $h$ is said to be {\it harmonic} if $\tau(h)=0$.  
\end{Def}

The tension $\tau(h)$ is a {\it $Y$-valued vector field on $X$}, i.e. 
it is a section of the pulled-back of the tangent bundle $TY\ra Y$ under the map 
$h:X\ra Y$.

\bl
\label{lemdefh}
Let $h:X\ra Y$ be a harmonic $\mc C^\infty$ map between Hadamard manifolds.
Let $y_0\in Y$ 
and let $\rho_h:X\ra \m R$ be the function 
$\rho_h:=d_{y_0}\circ h$. 
Then the continuous function $\rho_h$ is subharmonic on $X$. 
\el

\begin{proof}
The proof is similar to the proof of Lemma \ref{lemboundedlap}. 
We first recall the formula for the Laplacian of a composed function.
Let $f :X\ra Y$ be a
$\mc C^\infty$ map and $F\in \mc C^\infty(Y)$ be a $\mc C^\infty$ function on $Y$. Then one has
\begin{equation}
\label{eqndefh}
\Delta (F\circ f) =\sum_{1\leq i\leq k} D^2F(D_{e_i}f,D_{e_i}f)+ \langle DF ,\tau(f)\rangle\, ,
\end{equation}
where 
$(e_i)_{1\leq i\leq k}$ is an orthonormal basis of the tangent space to $X$.

Since $Y$ is a Hadamard manifold,  
the continuous function  $\rho_h=d_{y_0}\circ h$ is $\mc C^\infty$
outside  $h^{-1}(y_0)$. 
Using Formula \eqref{eqndefh}, the harmonicity of $h$ and Lemma \ref{lemhessiandist}, we compute
the Laplacian on $X\setminus h^{-1}(y_0)$~:
\begin{equation*}
\label{eqnderohx}
\Delta\,\rho_h=\sum_{1\leq i\leq k} D^2d_{y_0}(D_{e_i}h,D_{e_i}h)\geq 0\, .
\end{equation*}
The function $\rho_h$ is the uniform limit when $\eps$ go to $0$ 
of the $\mc C^\infty$ functions 
$\rho_{h,\eps}:=(\eps^2+\rho_h^2)^{1/2}$.
We compute their Laplacian on $X\setminus h^{-1}(y_0)$~:
$$
\Delta \rho_{h,\eps}=
\frac{\rho_h}{(\eps^2+\rho_h^2)^{1/2}}\, \Delta \rho_h +
\frac{\eps^2}{(\eps^2+\rho_h^2)^{3/2}}\; \geq 0.
$$
It follows that the inequality $\Delta \rho_{h,\eps}\geq 0$ also holds on the whole $X$.

One finally gets $\Delta\rho_h\geq 0$ as a distribution on $X$
by letting $\eps$ go to $0$.
\end{proof}

Another crucial property of harmonic maps is the following bound 
for their differential due to Cheng.

\bl
\label{lemcheng}
Let $X$, $Y$ be two Hadamard manifolds with $-b^2\leq K_X\leq 0$. Let $k=\dim X$, 
$x_0$ be a point of $X$,  $r_0>0$
and let $h:B(x_0,r_0)\ra Y$ be a harmonic $\mc C^\infty$ map 
such that 
the  image 
$ h(B(x_0,r_0))$ lies in a ball of radius $R_0$. 
Then one has the bound
\begin{equation*}
\label{eqncheng}
\|Dh(x_0)\|\leq 2^5\,k\,\tfrac{1+br_0}{r_0}\, R_0\; .
\end{equation*}
\el
In the applications, we will use this inequality with $b=1$ and $r_0=1$.

\begin{proof}
This is a simplified version of \cite[Formula 2.9]{Cheng80} 
obtained by keeping track of the constants in the proof.
\end{proof}

\subsection{Smoothing quasiisometric maps}  
\label{secsmoothquasi}

\bq
The following proposition will allow us to assume 
in Theorem \ref{thdfxhx} that the quasiisometric map $f$ is $\mc C^\infty$
with bounded covariant derivatives.
\eq
\bp
\label{prosmoothquasi}
Let $X$, $Y$ be two  symmetric spaces of non positive curvature and 
$f:X\ra Y$ be a quasiisometric map.
Then there exists a $\mc C^\infty$ quasiisometric map $\widetilde{f}:X\ra Y$ 
within bounded distance from $f$ 
and  whose  covariant derivatives $D^p\widetilde{f}$
are bounded on $X$ for all  $p\geq 1$.
\ep

This regularized  map $\widetilde{f}$ will be constructed as follows.
Let $\al:\m R\ra \m R$ be a non-negative $\mc C^\infty$ function with support 
$[-1,1]$.
For $x$ in $X$, we introduce the positive finite measure on $X$
$$
\al_x:=(\al\circ d_x^2)\,\rmd {\rm vol}_X.
$$   
Let $\mu_x:= f_*\al_x$ denote the image measure on $Y$. 
It is defined, for any positive function $\ph$ on $Y$, by
$$
\mu_x(\ph)=\int_X\ph(f(z))\,\al(d^2(x,z))\rmd {\rm vol}_X(z)\, .
$$
We choose $\al$ so that each $\mu_x$ is a probability measure.
By homogeneity of $X$, this fact does not depend on the point $x$.
We will define $\widetilde{f}(x)\in Y$ to be the center of mass of the measure $\mu_x$.
To be more precise, we will need the following Lemma \ref{lemsmoothquasi}
which is an immediate consequence of Lemma \ref{lemhessiandist}.

\bl
\label{lemsmoothquasi}
For $x$ in $X$, let $Q_x$ be the function on $Y$
defined for $y$ in $Y$~by
\begin{equation}
\label{eqnqxy}
Q_x(y)\; =\; 
\int_Xd^2(y,f(z))\,\al(d^2(x,z))\,\rmd {\rm vol}_X(z)\, .
\end{equation}
For $x$ in $X$, the functions $Q_x$ are proper and uniformly strictly convex.
More precisely, for all 
$x$ in $X$ and $y$ in $Y$, the Hessian admits the lower bound 
$$
D^2_yQ_x\geq 2 \, g_Y
$$ 
where $g_Y$ is the Riemannian 
metric on $Y$.
\el

\begin{proof}[Proof of Proposition \ref{prosmoothquasi}]
For $x$ in $X$, we define the point $\widetilde{f}(x)\in Y$
to be the center of mass of $\mu_x$, i.e. to be the unique point where the function $Q_x$ 
reaches its infimum. Equivalently, the point $y=\widetilde{f}(x)\in Y$ is the unique critical point
of the function $Q_x$, i.e. it is defined by the implicit equation 
$$
D_yQ_x=0\; .
$$
Since the map $f$ is $c$-quasiisometric, the support of the measure $\mu_x$ lies in the ball
$B(f(x),2\, c)$. Since $Y$ is a Hadamard manifold, the balls of $Y$ are convex  (see \cite{Ballmann95}) so that
the center of mass $\widetilde{f}(x)$ also belongs to the ball $B(f(x),2\, c)$.
In particular, one has 
$$
d(f,\widetilde{f})\leq 2\, c.
$$ 

We now check  that the map $\widetilde{f}:X\ra Y$ is $\mc C^\infty$.
Since the Hessians $D^2_yQ_x$ are nondegenerate, 
this follows from the implicit
function theorem applied to the $\mc C^\infty$ map
$$
\Psi:(x,y)\in X\times Y
\longrightarrow 
D_yQ_x\in T^*Y\, .
$$

To prove that the first derivative of $\widetilde{f}$ is bounded on $X$,
we first notice that Lemma \ref{lemsmoothquasi}
ensures that the covariant derivative
$$
D_y\Psi(x,y)=D_y^2Q_x\in \mc L(T_yY,T_yY^*)
$$ 
is an invertible linear map with
$$
\|(D_y\Psi(x,y))^{-1}\|\leq 1/2\, .
$$
We also notice that, since the point $y=\widetilde{f}(x)$ 
is at distance at most $4c$ 
of all the points $f(z)$ with $z$ in the ball $B(x,1)$,
the norm 
$$
\|D_x\Psi(x,\widetilde{f}(x))\|
$$
is also uniformly bounded on $X$.
Hence the norm 
$\|D\widetilde{f}\|$ of the derivative of $\widetilde{f}$ is uniformly bounded on $X$.

For the same reason, since $Y$ is homogeneous, 
the norm of each covariant derivative
$$
\|D^p_xD^q_y\Psi(x,\widetilde{f}(x))\|
$$
is uniformly bounded on $X$ for $p,q\geq 0$.
Hence the norm of each covariant derivative
$\|D^p\widetilde{f}\|$ 
is uniformly bounded on $X$ for  $p\geq 1$.
\end{proof}

\subsection{Existence of harmonic maps}  
\label{secexistharm}

\bq
In this section we prove Theorem \ref{thdfxhx},
taking for granted Proposition \ref{proexistharm} below. 
\eq

Let $X$, $Y$ be rank one symmetric spaces and 
$f:X\ra Y$ be a $c$-quasi\-isometric $\mc C^\infty$ map whose first two covariant derivatives are bounded.

We fix a point $O$ in $X$. For $R>0$, we denote by $B_{_R}:=B(O,R)$ 
the closed ball in $X$ with center $O$ and radius $R$
and by $\partial B_{_R}$ the sphere that bounds $B_{_R}$. 
Since the manifold $Y$ is a Hadamard manifold, there exists
a unique harmonic map $h_{_R}:B_{_R}\ra Y$ satisfying the Dirichlet condition
$h_{_R}=f$ on the sphere
$\partial B_{_R}$. 
Moreover, this harmonic  map $h_{_R}$ is {\it energy minimizing}. 
This means that the  map $h_{_R}$ achieves the minimum of the energy functional
\begin{equation}
\label{eqnenergy}
E_{_R}(h):=\int_{B_{_R}}\| Dh(x)\|^2\,\rmd{\rm vol}_X(x)\, 
\end{equation}
among all $\mc C^1$ maps $g$ on the ball which agree with $f$ on the sphere $\partial B_{_R}$,
i.e. one has 
$$
E_{_R}(h_{_R})=\inf_gE_{_R}(g).
$$
These facts are due to Schoen 
(see \cite{Schoen77} or \cite[Thm 12.11]{EeLem78}).
Thanks to Schoen and Uhlenbeck in \cite{SchoenUhl82} and \cite{SchoenUhl83},
the harmonic map $h_{_R}$ is known to be $\mc C^\infty$ on the closed ball $B_{_R}$.
We denote by 
$$
d(h_{_R},f)=\sup_{x\in B(O,R)}d(h_{_R}(x),f(x))
$$
the distance between these two  maps.

The main point of this article  is to prove the following uniform estimate.

\bp
\label{proexistharm}
There exists a constant $M\geq 1$ such that, for any $R\geq 1$, one has
$d(h_{_R},f)\leq M$.
\ep

Even though the argument is very classical, 
we first explain how  to deduce our main theorem 
from this Proposition.

\begin{proof}[Proof of Theorem \ref{thdfxhx}]
As explained in Proposition \ref{prosmoothquasi}, 
we may also assume 
that the $c$-quasiisometric map $f$ is $\mc C^\infty$ with bounded covariant derivatives.  
Pick an increasing sequence of radii $R_n$ converging to $\infty$
and let $h_{R_n}:B_{R_n}\ra Y$ be the harmonic $\mc C^\infty$ map 
which agrees with $f$ on the sphere $\partial B_{R_n}$. 
Proposition \ref{proexistharm}
ensures that
the sequence of  maps $h_{R_n}$ is locally uniformly bounded.
More precisely there exists $M\geq 1$
such that, for all $S\geq 1$, for $n$ large enough, one has 
$$
h_{R_n}(B_{2S})\subset B(f(O),2\, cS+M).
$$ 
Using the Cheng Lemma \ref{lemcheng} with $b=1$ and $r_0=1$,
it follows that the derivatives are 
also uniformly bounded on the balls  $B_{_S}$.
More precisely one has, for all $S\geq 1$, for $n$ large enough, 
$$
\sup_{x\in B_{_S}}\|Dh_{R_n}(x)\|  \leq 2^6 k\,(2\,cS+M)\; .
$$
The Ascoli-Arzela theorem implies that, after extraction,
the sequence $h_{R_n}$ converges  uniformly on every ball $B_{_S}$
towards a continuous map $h:X\ra Y$.

By construction this limit  map $h$ 
stays within bounded distance from the  
quasiisometric map $f$. 
We claim that the limit map $h$ is harmonic. 
Indeed, the  harmonic maps $h_{R_n}$ are 
energy minimizing and, on each ball $B_{_S}$, the energies of $h_{R_n}$ are
uniformly bounded~:
$$
\limsup_{n\ra\infty}E_{S}(h_{R_n})<\infty\, .
$$
Hence the Luckhaus compactness theorem for energy minimizing harmonic maps
(see \cite[Section 2.9]{Simon96}) tells us that 
the limit map $h$ is also harmonic and energy minimizing.
\end{proof}

\begin{Rem}
{\rm By Li-Wang uniqueness theorem in \cite{LinWang08}, the harmonic map $h$
which stays within bounded distance from $f$ is unique.
Hence the above argument also proves that 
the whole family of harmonic maps $h_R$ converges to $h$
uniformly on the compact subsets of $X$
when $R$ goes to infinity.}  
\end{Rem}

\subsection{Boundary estimate}
\label{secboundary}

\bq
In this section we begin the proof of 
Proposition \ref{proexistharm}~: 
we  bound the distance between  $h_{_R}$ and $f$ near the
sphere  $\partial B_{_R}$.
\eq

\bp
\label{proboundary}
Let $X$, $Y$ be Hadamard manifolds and  $k=\dim X$. 
Assume that $-1\leq K_X\leq -a^2<0$. 
Let $c\geq 1$
and $f:X\ra Y$ be a $\mc C^\infty$ map with $\|Df(x)\|\leq c$ and 
$\|D^2f(x)\|\leq c$. 
Let  $O\in X$, $R>0$, $B_{_R}:=B(O,R)$. 

Let $h_{_R}:B_{_R}\ra Y$
be the harmonic $\mc C^\infty$ map  whose restriction to the sphere
$\partial B_{_R}$ is equal to $f$.
Then, for all $x$ in $B_{_R}$, one has 
\begin{equation}
\label{eqnboundary}
d(h_{_R}(x),f(x))\leq \tfrac{4kc^2}{a}\, d(x,\partial B_{_R})\; .
\end{equation}
\ep

An important feature of this upper bound is that 
it does not depend on the radius $R$,
provided the distance  $d(x,\partial B_{_R})$ remains bounded.
This is why we call \eqref{eqnboundary} the {\it boundary estimate}.
The proof relies on an idea of Jost in \cite[Section 4]{Jost84}

\begin{proof}
Let $x$ be a point in $B_{_R}$ and 
$w$ in $\partial B_{_R}$ 
such that $d(x,w)=d(x,\partial B_{_R})$. 
Since $h_{_R}(w)=f(w)$, the triangle inequality reads~as
\begin{equation}
\label{eqndfxhrx}
d(f(x),h_{_R}(x))\leq 
d(f(x),f(w)) + d(h_{_R}(w),h_{_R}(x))\; .
\end{equation}
The assumption on $f$ ensures that
\begin{equation}
\label{eqndfxfw}
d(f(x),f(w)) \leq c\, d(x,\partial B_{_R})\; .
\end{equation}
To estimate the other term, we choose a point $y_0$ on the geodesic ray starting from 
$h_{_R}(x)$ and passing by $h_{_R}(w)$. This choice of $y_0$ ensures that one has the equality
\begin{equation}
\label{eqndhrwhrx}
d(h_{_R}(w),h_{_R}(x))= d(h_{_R}(x),y_0)-d(h_{_R}(w),y_0)\; .
\end{equation}
We also choose $y_0$ far enough so that 
\begin{equation}
\label{eqnfz1}
F(z):= d(f(z),y_0)\geq 1
\;\; \mbox{\rm for all $z$ in $B_{_R}$.}
\end{equation}
This function $F$ is then $\mc C^\infty$
on the ball $B_{_R}$. 
Let $H:B_{_R}\ra \m R$ be the 
harmonic $\mc C^\infty$ function whose restriction to the sphere 
$\partial B_{_R}$ is equal to $F$.
By Lemma \ref{lemdefh}, since $h_{_R}$ is a harmonic map, the function 
$z\mapsto d(h_{_R}(z),y_0)$ is subharmonic on $B_{_R}$. 
Since this function is equal to $H$
on the sphere $\partial B_{_R}$, the maximum principle
ensures that
\begin{equation}
\label{eqndhrxy0}
d(h_{_R}(z),y_0)\leq H(z) 
\;\; \mbox{\rm for all $z$ in $B_{_R}$,}
\end{equation}
with equality for $z$ in $\partial B_{_R}$.
Combining \eqref{eqndhrwhrx} and \eqref{eqndhrxy0}, one gets 
\begin{equation}
\label{eqndhrxhrw}
d(h_{_R}(w),h_{_R}(x))\leq H(x)-H(w) \, .
\end{equation}
To estimate the right-hand side of \eqref{eqndhrxhrw}, 
we observe that the function 
$G:=F-H$ vanishes on  $\partial B_{_R}$ and has bounded Laplacian~: 
$$
|\Delta G|\leq 3k c^2.
$$
Indeed, using Formulas \eqref{eqndefh},
\eqref{eqnd2rhox0} and \eqref{eqnfz1}, one computes 
\begin{eqnarray*}
\label{eqndeg}
|\Delta G|=|\Delta (d_{y_0}\circ f)| \leq
k\|D^2d_{y_0}\|\|D f\|^2+ k\| Dd_{y_0}\| \|D^2f\|
\leq 3kc^2\, .
\end{eqnarray*}
Using Proposition \ref{proboundedlap}, one deduces that 
$$
|G(x)|\leq \tfrac{3kc^2}{a}\,d(x,\partial B_{_R})
$$  
and therefore, combining with 
\eqref{eqndfxhrx}, \eqref{eqndfxfw} and \eqref{eqndhrxhrw}, one concludes that
\begin{eqnarray*}
d(f(x),h_{_R}(x))&\leq& c\, d(x,\partial B_{_R})+
|G(x)|+|F(x)-F(w)|\\
&\leq&  
(\tfrac{3kc^2}{a}+2c)\,d(x,\partial B_{_R}).
\end{eqnarray*}
This proves \eqref{eqnboundary}.
\end{proof}


\section{Interior estimate}
\label{secinterior}

In this chapter we complete the proof of Proposition \ref{proexistharm}.  
We follow the strategy explained in the introduction (Section \ref{secstrategy}).

\subsection{Notation}  
\label{secnotation}

\bq
We first explain more precisely the notation and the assumptions 
that we will use in the
whole chapter.
\eq

Let $X$ and $Y$ be rank one symmetric spaces and $k=\dim X$. 
We start with a $\mc C^\infty$ 
quasiisometric map $f:X\ra Y$ all of whose covariant derivatives are bounded.
We fix a constant $c \geq 1$ such that, for all $x$, $x'$ in $X$,
one has 
\begin{equation}
\label{eqnquasiiso3}
\| Df(x)\|\leq c
\;\;{\rm , }\;\;\;
\| D^2f(x)\|\leq c
\;\;\;\;{\rm and }\;\;\;
\end{equation}
\begin{equation}
\label{eqnquasiiso2}
c^{-1}\, d(x,x')-c
\;\leq\; 
d(f(x),f(x'))
\;\leq\; 
c\, d(x,x')\; .
\end{equation}
Note that the additive constant $c$ on the right-hand side term of \eqref{eqnquasiiso}
has been removed since the derivative of $f$ is bounded by $c$.

We fix a point $O$ in $X$. For $R>0$, we 
introduce the harmonic $\mc C^\infty$ map $h_{_R}:B(O,R)\ra Y$ whose restriction to the sphere
$\partial B(O,R)$ is equal to $f$. 
We let 
\begin{equation*}
\label{eqnror}
\rho_{_R}:=\sup_{x\in B(O,R)}d(h_{_R}(x),f(x))\; .
\end{equation*}
We denote by $x_{_R}$ a point of $B(O,R)$ where the supremum is achieved~:
$$
d(h_{_R}(x_{_R}),f(x_{_R}))=\rho_{_R}\, .
$$
According to the boundary estimate in Proposition \ref{proboundary}, 
one has,
\begin{equation*}
\label{eqndx0dbr}
d(x_{_R},\partial B(O,R))\geq \tfrac{1}{8kc^2}\, \rho_{_R}\, .
\end{equation*}
When $\rho_{_R}$ is large enough, 
we introduce a  ball $B(x_{_R},r_{_R})$ with center 
$x_{_R}$, and whose radius 
$r_{_R}$ is a function of $R$ satisfying
\begin{equation}
\label{eqnrr}
1\leq r_{_R}\leq \tfrac{1}{16kc^2}\rho_{_R} \, .
\end{equation}
Note that this condition ensures the inclusion 
$B(x_{_R},r_{_R})\subset B(O,R\!-\!1).$
Later on, in section \ref{secboundi0}, we will assume that
$r_{_R}:= \rho_{_R}^{1/3}.$

We will focus on the restrictions of the  maps $f$ and $h_{_R}$ to this ball $B(x_{_R},r_{_R})$.
We will express the  maps $f$ and $h_{_R}$ through the  
polar exponential  coordinates $(\rho,v)$ in $Y$ centered at 
the point $y_{_R}:=f(x_{_R})$.
For $z$ in $B(x_{_R},r_{_R})$, we will thus write 
\begin{eqnarray*}
f(z)
&=&\exp_{y_{_R}}(\rho_f(z) v_f(z))\\
h_{_R}(z)
&=&
\exp_{y_{_R}}(\rho_h(z) v_h(z))\\
h_{_R}(x_{_R})
&=&
\exp_{y_{_R}}(\rho_{_R} v_{_R})
\end{eqnarray*}
where $\rho_f(z)\geq 0$, $\rho_h(z)\geq 0$ and where $v_f(z)$, $v_h(z)$ and $v_{_R}$ belong
to the unit sphere $T^1_{y_{_R}}Y$ of
the tangent space $T_{y_{_R}}Y$.
Note that $\rho_{h}$ and $v_{h}$ are shorthands for 
$\rho_{h_{_R}}$ and $v_{h_{_R}}$. For simplicity,
we do not write the dependance on $R$.

We denote by $[x_{_R},z]$ the geodesic segment between $x_ {_R}$ and $z$.

\begin{Def}
\label{defurvrwr}
We introduce the following subsets of the sphere  $S(x_{_R},r_{_R})$~:
\begin{eqnarray*}
\label{eqnur}
U_{_R}&=&
\{z\in S(x_{_R},r_{_R})\mid \rho_h(z)\geq \rho_{_R}-\tfrac{1}{2c}\, r_{_R}\}\\
\label{eqnvr}
V_{_R}&=&
\{z\in S(x_{_R},r_{_R})\mid \rho_h(z_t)\geq \rho_{_R}/2
\;\;\; \mbox{\rm for all\; $z_t$ in $[x_{_R},z]$}\, \}\\
\label{eqnwr}
W_{_R}&=& U_{_R}\cap V_{_R}\, .
\end{eqnarray*}
\end{Def}

\subsection{Measure estimate}  
\label{secmeasure}

\bq
We first notice that  
one can control the size of $\rho_h(z)$ and of $Dh_{_R}(z)$ on the ball $B(x_{_R},r_{_R})$.
We will then give a lower bound for the measure of $W_{_R}$.
\eq

\bl
\label{lemrhx}
Assume \eqref{eqnrr}. For $z$ in $B(x_{_R},r_{_R})$, one has 
\begin{equation*}
\label{eqnrohx}
\rho_h(z)\leq \rho_{_R}+c\,r_{_R}.
\end{equation*}
\el

\begin{proof}
The triangle inequality and \eqref{eqnquasiiso2} give,
for $z$ in $B(x_{_R},r_{_R})$,
\vspace{1ex}

\mbox{}\hfill$
\rho_h(z)\leq d(h_{_R}(z),f(z))+d(f(z),y_{_R})\leq \rho_{_R}+c\,r_{_R}\, .
$\hfill
\end{proof}

\bl
\label{lemdhx}
Assume \eqref{eqnrr}. For $z$ in $B(x_{_R},r_{_R})$, one has 
\begin{equation*}
\label{eqndhx}
\|Dh_{_R}(z)\|\leq 2^8\, k\,\rho_{_R}.
\end{equation*}
\el

\begin{proof}
For all $z$, $z'$ in $B(O,R)$ with $d(z,z')\leq 1$, the triangle inequality and \eqref{eqnquasiiso2}
yield
\begin{eqnarray*}
d(h_{_R}(z),h_{_R}(z'))&\leq &
d(h_{_R}(z),f(z))+d(f(z),f(z'))+d(f(z'),h_{_R}(z'))\\
&\leq &\rho_{_R}+ c +\rho_{_R}\leq 4\,\rho_{_R}\, . 
\end{eqnarray*}
Applying Cheng's lemma \ref{lemcheng} with $b=1$ and $r_0=1$,
one gets for all $z$ in $B(O,R\! -\! 1)$ the bound
$\|Dh_{_R}(z)\|\leq 2^8\, k\,\rho_{_R}$.
\end{proof}

We now give a lower bound for the measure of $W_{_R}$.
We will denote by the same letter 
$\si$ the probability measure on each sphere $ S(x_{_R},r_{_R})$
that is invariant under all the isometries of $X$ that fix the point $x_{_R}$.

\begin{Lem}
\label{lemsiwr}
Assume \eqref{eqnrr}. Then one has
\begin{eqnarray}
\label{eqnsiwr}
\si(W_{_R})&\geq &\frac{1}{3\,c^2}-2^{12}k\,c\,\frac{r_{_R}^2}{\rho_{_R}}\; .
\end{eqnarray}
\end{Lem}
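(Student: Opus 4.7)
The plan is to estimate $\sigma(U_R^c)$ and $\sigma(V_R^c)$ separately and then use $\sigma(W_R)\ge \sigma(U_R)-\sigma(V_R^c)$. The common input is a Markov-type inequality on each sphere $S(x_R,r)$ with $r\le r_R$. Since $\rho_h$ is subharmonic (Lemma \ref{lemdefh}) and $X$ is rotationally symmetric about $x_R$ — its isotropy group acts transitively on each geodesic sphere — any rotation-invariant probability measure on $\partial B(x_R,r)$ is unique, so the harmonic measure seen from $x_R$ coincides with the uniform probability measure $\sigma$. Hence, for every $r\in(0,r_R]$,
\[
\int_{S(x_R,r)}\rho_h\,d\sigma\;\ge\;\rho_h(x_R)\;=\;\rho_R.
\]
The argument of Lemma \ref{lemrhx}, applied to the sub-ball $B(x_R,r)$, also gives $\rho_h(z)\le \rho_R+cr$ on $S(x_R,r)$. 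Subtracting yields $\int_{S(x_R,r)}(\rho_R+cr-\rho_h)\,d\sigma\le cr$, and Markov's inequality produces
\[
\sigma\bigl(\{z\in S(x_R,r):\rho_h(z)\le\mu\}\bigr)\;\le\;\frac{cr}{\rho_R+cr-\mu}\qquad\text{for every }\mu<\rho_R+cr.
\]

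Applying this at $r=r_R$, $\mu=\rho_R-r_R/(2c)$ gives $\sigma(U_R^c)\le\frac{cr_R}{cr_R+r_R/(2c)}=\frac{2c^2}{2c^2+1}$, so $\sigma(U_R)\ge\frac{1}{2c^2+1}\ge\frac{1}{3c^2}$ since $c\ge 1$. To bound $\sigma(V_R^c)$, I would discretise using Cheng's estimate (Lemma \ref{lemdhx}): $\|Dh_R\|\le L:=2^8k\,\rho_R$ on $B(x_R,r_R)$, so $\rho_h$ is $L$-Lipschitz along every geodesic issued from $x_R$. Set $N:=\lceil 2^{10}\,k\,r_R\rceil$ and $r_k:=kr_R/N$ for $k=1,\dots,N$. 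If $\xi\in V_R^c$, pick $t^*\in(0,r_R]$ with $\rho_h(z_{t^*})<\rho_R/2$ and let $r_k$ be the smallest grid point $\ge t^*$; then $r_k-t^*\le r_R/N$ and
\[
\rho_h(z_{r_k})\;\le\;\rho_h(z_{t^*})+L\cdot r_R/N\;<\;\rho_R/2+\rho_R/4\;=\;3\rho_R/4.
\]
Hence $V_R^c\subset\bigcup_{k=1}^N\{\xi:\rho_h(z_{r_k})\le 3\rho_R/4\}$, and applying the sphere-wise Markov bound at $r=r_k$, $\mu=3\rho_R/4$ gives
\[
\sigma(V_R^c)\;\le\;\sum_{k=1}^N\frac{cr_k}{\rho_R/4+cr_k}\;\le\;\sum_{k=1}^N\frac{4cr_k}{\rho_R}\;=\;\frac{2cr_R(N+1)}{\rho_R}\;\le\;\frac{2^{12}\,kc\,r_R^2}{\rho_R},
\]
using $N+1\le 2^{11}k\,r_R$, which is valid since $k\ge 2$ and $r_R\ge 1$.

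Combining the two bounds yields $\sigma(W_R)\ge\sigma(U_R)-\sigma(V_R^c)\ge\frac{1}{3c^2}-\frac{2^{12}kc\,r_R^2}{\rho_R}$, as claimed. The delicate point is the choice of threshold in the discretisation: a direct union bound over the sets $\{\rho_h(z_{r_k})\le\rho_R/2\}$ would fail, since Lipschitz drift $Lr_R/N$ could push a $\rho_h<\rho_R/2$ event away from every grid sphere; inflating the threshold to $3\rho_R/4$ accommodates this drift and forces $N\asymp kr_R$, which is exactly what produces the $r_R^2/\rho_R$ error term and explains why Cheng's gradient bound, rather than subharmonicity alone, is needed for the $V_R$ part.
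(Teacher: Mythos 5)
Your proof is correct and reaches the paper's exact bound $\sigma(W_{_R})\geq \tfrac{1}{3c^2}-2^{12}kc\,\tfrac{r_{_R}^2}{\rho_{_R}}$, using the same three inputs as the paper (the sub--mean--value inequality from subharmonicity of $\rho_h$, the upper bound $\rho_h\leq\rho_{_R}+cr$ from Lemma~\ref{lemrhx}, and Cheng's gradient bound from Lemma~\ref{lemdhx}), but organised differently. For the $U_{_R}$ estimate the two arguments are essentially the same rearrangement of $\int_{S(x_{_R},r_{_R})}(\rho_h-\rho_{_R})\,\rmd\si\geq 0$. For $V_{_R}$, however, your route is genuinely different: the paper uses a continuous Green identity $\int_{S}\int_0^{r_{_R}}(\rho_h(z_t)-\rho_{_R})\,\rmd t\,\rmd\si\geq 0$ and, for each $z\in V_{_R}^c$, extracts a time window $[s_z,t_z]$ of length $\geq 2^{-10}k^{-1}$ on which $\rho_h-\rho_{_R}\leq -\rho_{_R}/4$, balancing this against the positive part $cr_{_R}^2$. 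You instead reduce everything to a sphere-by-sphere Markov inequality (after identifying $\si$ with harmonic measure seen from $x_{_R}$, which is the averaging-over-the-isotropy-group argument in disguise), discretise the radial variable into $N=\lceil 2^{10}kr_{_R}\rceil$ grid spheres chosen so that the Cheng Lipschitz drift $L r_{_R}/N$ is at most $\rho_{_R}/4$, inflate the threshold from $\rho_{_R}/2$ to $3\rho_{_R}/4$ to absorb that drift, and run a union bound over the grid. Your closing remark correctly identifies the delicate point: the threshold inflation is precisely what makes the union bound survive, and the resulting grid size $N\asymp kr_{_R}$ is what produces the $r_{_R}^2/\rho_{_R}$ error term, mirroring the role of the length $\asymp k^{-1}$ of the window $[s_z,t_z]$ in the paper. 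The discrete version is arguably more transparent, at the cost of some bookkeeping with $N$; the paper's continuous version is tighter conceptually. One presentational caveat: you reuse the letter $k$ both for $\dim X$ and for the grid index, which should be disentangled, but this is purely notational and does not affect the correctness.
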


\begin{proof}
The proof relies on the subharmonicity of the function $\rho_h$ 
on the ball $B(x_{_R},r_{_R})$ (see Lemma \ref{lemdefh}). 
We claim that
\begin{equation}
\label{eqnintrhru}
\int_{S(x_{_R},r_{_R})}(\rho_h(z)-\rho_{_R})\rmd\si(z) \geq 0\; .
\end{equation} 
Let us give a short proof of this special case of the Green formula. 
Since $X$ is a symmetric space, the group $\Ga$ of isometries of $X$
that fix  the point $x_{_R}$ is a compact group which acts transitively on the spheres
$S(x_{_R},t)$. Let $\rmd \ga$ be the Haar probability measure on $\Ga$. 
The function $F:=\int_{\Ga}\rho_h\circ \ga \,\rmd \ga$, defined as the average of the translates of 
the function $\rho_h$ under $\Ga$,  is equal to a constant $F_t$ 
on each sphere $S(x_{_R},t)$ of radius $t\leq r_{_R}$.
By the maximum principle applied to this subharmonic function $F$, one gets $F_0\leq F_t$ for all $t\leq r_{_R}$. Since $F_0=\rho_h(x_{_R})=\rho_{_R}$,
this proves \eqref{eqnintrhru}.
\vs

\noi{\it First step.} We prove
\begin{eqnarray}
\label{eqnsiur}
\si(U_{_R})&\geq &\frac{1}{3\,c^2}\; .
\end{eqnarray}
By Lemma \ref{lemrhx}, the function $\rho_h$ is bounded by $\rho_{_R}+c\,r_{_R}$, 
hence Equation \eqref{eqnintrhru} implies
\begin{equation*}
c\, r_{_R}\,\si(U_{_R})-\frac{r_{_R}}{2c}\,(1-\si(U_{_R}))\geq 0\; 
\end{equation*} 
so that $\si(U_{_R})\geq (1+2\,c^2)^{-1}\geq c^{-2}/3$.
\vs

\noi{\it Second step.} We  prove 
\begin{eqnarray}
\label{eqnsivr}
\si(V_{_R})&\geq &1-2^{12}k\,c\,\frac{r_{_R}^2}{\rho_{_R}}\; .
\end{eqnarray}
For $z$ in the complementary subset $V_{_R}^c\subset S(x_{_R},r_{_R})$, we define
\begin{eqnarray*}
t_{z}&:=&\inf\{ t\in[0, r_{_R}]\;\;\mid\; \rho_h(z_t)=\tfrac12\,\rho_{_R}\}\, ,\\
s_{z}&:=&\sup\{ t\in [0, t_z] \mid\; \rho_h(z_t)=\tfrac34\,\rho_{_R}\}\, .
\end{eqnarray*} 
We claim that, for each $z$ in $V_{_R}^c$, one has 
\begin{equation}
\label{eqntusu}
t_z-s_z\geq 2^{-10} k^{-1}\, .
\end{equation}
Indeed, the length of the curve $t\mapsto h_{_R}(z_t)$ between 
$t=s_z$ and $t=t_z$ is at least 
$\frac{\rho_{_R}}{4}$. Hence, using  Lemma \ref{lemdhx}, one gets
\begin{eqnarray*}
\frac{\rho_{_R}}{4} 
&\leq & (t_z-s_z)\,\sup_{B(x_{_R},r_{_R})}\| Dh_{_R}\|
\leq 2^8\,k\,(t_z-s_z)\rho_{_R}\, , 
\end{eqnarray*}  
which prove \eqref{eqntusu}. 

The Green formula also  gives the following variation of
\eqref{eqnintrhru} 
\begin{equation}
\label{eqnintrhtu}
\int_{S(x_{_R},r_{_R})}\int_0^{r_{_R}}(\rho_h(z_t)-\rho_{_R})\rmd t\,\rmd\si(z) \geq 0\; .
\end{equation} 
By Lemma \ref{lemrhx}, the function $\rho_h$ is bounded by $\rho_{_R}+c\,r_{_R}$, 
hence Equation \eqref{eqnintrhtu} implies
\begin{equation}
\label{eqnintvr}
c\, r_{_R}^2+\int_{V_{_R}^c}\int_{s_z}^{t_z}(\rho_h(z_t)-\rho_{_R})\rmd t\,\rmd\si(u) \geq 0\; .
\end{equation} 
Using the bound $\rho_h(z_t)\leq \tfrac34\,\rho_{_R}$, 
for all $t$ in the interval $[s_z,t_z]$, one deduces from  \eqref{eqntusu} and \eqref{eqnintvr}
that
$$
c\,r_{_R}^2-2^{-10} k^{-1}\tfrac{\rho_{_R}}{4}\,\si(V_{_R}^c)\geq 0\; .
$$
This proves \eqref{eqnsivr}.

Since $W_{_R}=U_{_R}\cap V_{_R}$, the bound \eqref{eqnsiwr} follows from \eqref{eqnsiur} and \eqref{eqnsivr}.
\end{proof}

\subsection{Upper bound for $\theta(v_f(z),v_h(z))$}
\label{secboundi1}
\bq 
For all $v$ in $U_R$, we give an upper bound for the angle  between $v_f(z)$ and $v_h(z)$.
\eq

For two vectors $v_1$, $v_2$ of the unit sphere $T^1_{y_{_R}}Y$ of the tangent space $T_{y_{_R}}Y$, we denote by 
$\theta(v_1, v_2)$  the angle between these two vectors. 

\bl
\label{lemI1}
Assume \eqref{eqnrr}. Then, for  $z$ in $U_{_R}$, one has
\begin{eqnarray}
\label{eqnI1b}
\theta(v_f(z),v_h(z))&\leq & 4\,e^{\frac{c}{4}}\,e^{-\frac{r_{_R}}{8\, c}}.
\end{eqnarray} 
\el

\begin{proof}
For $z$ in $U_{_R}$, we consider the triangle with vertices 
$y_{_R}$, $f(z)$ and $h_{_R}(z)$. Its side lengths satisfy
\begin{eqnarray*}
\label{eqnsidel}
\ell_0&:=&d(h_{_R}(z),f(z))\leq \rho_{_R}
\hspace{5ex} \mbox{\rm by definition of $\rho_{_R}$,}\\
\ell_1&:=&\rho_f(z)\geq \tfrac{1}{c}\,r_{_R}- c
\hspace{8.5ex} \mbox{\rm since $f$ is $c$-quasiisometric,}\\
\ell_2&:=&\rho_h(z)\geq \rho_{_R}-\tfrac{1}{2c}\, r_{_R}
\hspace{6ex} \mbox{\rm by definition of $U_R$.}
\end{eqnarray*}
Since $K_Y\leq -1/4$, applying Lemma \ref{lemcomparison} with $a=\frac12$, one gets
\vspace{1ex}

\mbox{}\hfill
$
\theta(v_f(z)),v_h(z))\leq 
4\, e^{-\frac{1}{4}(\ell_1+\ell_2-\ell_0)}\leq 
4\, e^{\frac{c}{4}}\,e^{-\frac{r_{_R}}{8\, c}}\, .
$ 
\hfill
\end{proof}

\subsection{Upper bound for $\theta(v_h(z),v_{_R})$}
\label{secboundi2}
\bq 
For all $v$ in $V_R$, we give an upper bound for the angle between $v_h(z)$ and $v_{_R}$.
\eq  

\bl
\label{lemI2}
Assume \eqref{eqnrr}. Then, for  $z$ in $V_{_R}$, one has
\begin{eqnarray}
\label{eqnI2b}
\theta(v_h(z),v_{_R})&\leq &
\frac{8\,\rho_{_R}^2}{{\rm sinh}(\rho_{_R}/4)}.
\end{eqnarray} 
\el

\begin{proof}
Let us first sketch the proof.
We recall that the curve $t\mapsto z_t$, for $0\leq t\leq r_{_R}$,  
is the geodesic segment
between $x_{_R}$ and  $z$.
By definition, for each  $z$ in $V_{_R}$, the curve $t\mapsto h_{_R}(z_t)$ lies outside of the ball $B(y_{_R},\rho_{_R}/2)$
and by Cheng's bound on $\| Dh_{_R}(z_t)\|$ 
one controls the length of this curve.

We now  detail the argument. For $z$ in $V_{_R}$, we have the inequality
\begin{eqnarray*}
\label{eqnIR2a}
\theta(v_h(z), v_{_R})
&\leq &
r_{_R}\,\sup_{0\leq t\,\leq \,r_{_R}}\| Dv_h(z_t)\|\, .
\end{eqnarray*}
Since $K_Y\leq -1/4$ the Alexandrov triangle comparison theorem and the Gauss lemma (\cite[2.93]{GHL04}) yield,
for $y$ in $Y\smallsetminus\{ y_{_R}\}$,
\begin{equation*}
\label{eqndvy}
2\,{\rm sinh}(\rho(y)/2)\,\| Dv(y)\|\leq 1\, ,
\end{equation*}
where $(\rho(y), v(y))\in\; ]0,\infty[\times T^1_{y_{_R}}Y$ are the polar exponential coordinates on $Y$ centered at $y_{_R}$.
Since $\rho_h=\rho\circ h$ and $v_h=v\circ h$, one gets,  for  $x$ in $B(O,R)$ with $h_{_R}(x)\neq y_{_R}$, 
\begin{equation*}
\label{eqndvhdh}
2\,{\rm sinh}(\rho_h(x)/2)\,\| Dv_h(x)\|\leq \| Dh_R (x)\|\, .
\end{equation*}
Hence 
since the point $z$ belongs to $V_{_R}$
one deduces 
\begin{eqnarray*}
\label{eqnIR2b}
\theta(v_h(z), v_{_R})
&\leq &
\frac{r_{_R}}{2\,{\rm sinh}(\rho_{_R}/4)}\,\sup_{0\leq t\,\leq \,r_{_R}}\| Dh_R(z_t)\|\, .
\end{eqnarray*}
Hence using  Lemma \ref{lemdhx} one gets
\begin{eqnarray*}
\label{eqnIR2c}
\theta(v_h(z), v_{_R})
&\leq &
2^8\, k\,\rho_{_R}\,\frac{r_{_R}}{2\,{\rm sinh}(\rho_{_R}/4)}\,.
\end{eqnarray*}
Using \eqref{eqnrr} this finally gives \eqref{eqnI2b}.
\end{proof}

\subsection{Lower bound for $\theta(v_f(z),v_{_R})$}
\label{secboundi0}
\bq 
When $\rho_{_R}$ is large enough, we find a point $z=z_{_R}$ in $W_{_R}$ for which the angle between $v_f(z)$ and $v_{_R}$ is bounded below.
\eq  

For a subset $W$ of the unit sphere 
$ T^1_{y_{_R}}Y$, 
we denote by 
$$
{\rm diam}(W):=\sup(\{\theta(v,v')\mid v,v'\in W\})
$$ 
the diameter of $W$.

\bl
\label{lemI0} 
Assume that there exists a sequence of radii $R$ going to infinity 
such that $\rho_{_R}$ goes to infinity.
Then, choosing $r_{_R}:=\rho_{_R}^{1/3}$, 
the diameters  
$$
{\rm diam}(\{v_f(z)\mid z\in W_{_R}\})
$$ 
do not converge to $0$
along this sequence. 
\el

\begin{proof}
Let $\si_0:=\frac{1}{4c^2}$. According to Lemma \ref{lemsiwr}, one has
\begin{equation*}
\label{eqnsiwro}
\liminf_{R\ra\infty}
\si( W_{_R})>\si_0 >0\; .
\end{equation*}

There exists  $\eps_0>0$ such that every subset $W$ of the Euclidean sphere $\m S^{k-1}$ whose 
normalized measure 
is at least  $\si_0$ contains two  points 
whose angle  is at least $\eps_0$.

Hence
if $R$ and $\rho_{_R}$ are large enough
one can find 
$z_1$, $z_2$ in $W_{_R}$ such that 
\begin{equation}
\label{eqnu1u2}
\theta_{x_{_R}}(z_1,z_2)\geq\eps_0
\;\;\;\; {\rm and} \;\;\;\;\;
r_{_R}\geq\, \frac{(A+1)c}{ \sin^2(\eps_0/2)}\, ,
\end{equation}
where $\theta_{x_{_R}}(z_1,z_2)$ is the angle 
between $z_1$ and $z_2$ seen from $x_{_R}$,
and where $A$ is the  constant given 
by Lemma \ref{lemproduct}.
According to  Lemma \ref{lemcomparison}.$a$, we infer that
\begin{equation*}
\label{eqnxrzz}
\min( (x_{_R}|z_1)_{z_2},(x_{_R}|z_2)_{z_1}))\geq (A+1)c.
\end{equation*}
Using then Lemma \ref{lemproduct}, one gets 
\begin{equation}
\label{eqnyrfxfx}
\min( (y_{_R}|f(z_1))_{f(z_2)},(y_{_R}|f(z_2))_{f(z_1)}))\geq 1.
\end{equation}
We now have the inequalities
\begin{eqnarray*}
\theta(v_f(z_1),v_f(z_2))
&\geq & 
e^{-(f(z_1)|f(z_2))_{y_{_R}}}
\hspace{6.5ex} \mbox{\rm by  Lemma \ref{lemcomparison}.$c$ and \eqref{eqnyrfxfx}}\\
&\geq &
e^{-A}\, e^{-c\,(z_1|z_2)_{x_{_R}}}
\hspace{6ex} \mbox{\rm by Lemma \ref{lemproduct}}\\
&\geq &
e^{-A}\, (\eps_0/4)^{2c}
\hspace{9.5ex} \mbox{\rm by Lemma \ref{lemcomparison}.$b$
and \eqref{eqnu1u2}.}\\
\end{eqnarray*}
This proves our claim.
\end{proof}

\begin{proof}[End of the proof of Proposition \ref{proexistharm}] 
Assume that there exists a sequence of radii $R$ going to infinity such that
$\rho_{_R}$ goes also to infinity. We set $r_{_R}=\rho_{_R}^{1/3}$. Using  Lemmas \ref{lemI1} and \ref{lemI2} and the triangle inequality, one gets 
\begin{equation}
\label{eqnsupdvfvr}
\lim_{R\ra\infty}
\sup_{z\in W_{_R}} 
\theta(v_f(z),v_{_R})=0\, .
\end{equation}
This contradicts Lemma \ref{lemI0}. 
\end{proof}


\small{
\bibliographystyle{plain}
\bibliography{harmonic}
}
\vspace{2em}

{\small\noindent Y. Benoist  \& D. Hulin,\;
CNRS \& Universit\'e Paris-Sud, 
Orsay 91405 France\\
\noindent yves.benoist@math.u-psud.fr \;\&\; dominique.hulin@math.u-psud.fr}

\end{document}